\newcommand{\anu}{\begin{enumerate}}
\newcommand{\pab}{\end{enumerate}}
\newcommand{\ee}{\end{equation}} 
\newcommand{\bb}{\begin{equation}}
\newtheorem{lemma}{Lemma}[section]
\newtheorem{rem}{Remark}[section]
\newtheorem{theorem}{Theorem}[section]
\newtheorem{cor}{Corollary}[section]
\title{ New upper bounds for Ramanujan primes 
}
\author{
Anitha Srinivasan\thanks{Saint Louis University-
Madrid Campus,
Avenida del Valle 34,
28003 Madrid, Spain.
email: rsrinivasan.anitha{@}gmail.com }, 
Pablo Ar\'es\thanks{Universidad San Pablo CEU, 
Juli\'an Romea, 23, 28003 Madrid, Spain.
email: pablo.aresgastesi{@}ceu.es}
}
\date{}
\begin{document}

\maketitle

\setcounter{page}{1}

 \begin{abstract} 

For $n\ge 1$, the $n^{\rm th}$ Ramanujan prime is defined as the smallest 
 positive integer $R_n$ such that for all $x\ge R_n$, the interval
$(\frac{x}{2}, x]$ has at least $n$ primes. 
We show that for every $\epsilon>0$, there is a positive integer $N$ such that
if $\alpha=2n\left(1+\dfrac{\log 2+\epsilon}{\log n+j(n)}\right)$, then 
 $R_n< p_{[\alpha]}$ for all $n>N$, 
 where $p_i$ is the $i^{\rm th}$ prime and
$j(n)>0$ is any function that satisfies 
$j(n)\to \infty$ and $nj'(n)\to 0$.
 \end{abstract}

 \begin{section}{\bf Introduction}\end{section}
 For $n\ge 1$, the $n^{\rm th}$ Ramanujan prime is defined as the  smallest
 positive integer $R_n$, such that for all $x\ge R_n$, the interval 
$(\frac{x}{2}, x]$ has at least $n$ primes. Note that by the minimality 
condition, $R_n$ is prime and the interval 
$(\frac{R_n}{2}, R_n]$ contains exactly $n$ primes. 
Let $R_n=p_s$,  
where $p_i$ denotes the $i^{\rm th}$ prime. 
 Sondow \cite{So} 
showed that $p_{2n}<R_n<p_{4n}$ for all $n$, and conjectured 
that $R_n<p_{3n}$ for all $n$. This conjecture was proved by 
Laishram \cite{La}, and the upper bound $p_{3n}$ improved by various authors (\cite{Ax1}, \cite{SNN}). 
%who showed 
%that $R_n<\frac{41}{47}p_{3n}$.
Subsequently, Srinivasan \cite{As} and Axler \cite{Ax1}  improved these bounds by showing that 
 for every $\epsilon>0$, there
exists an integer $N$ such that
 $$R_n<p_{[2n(1+\epsilon)]}
 {\text { for all }} n>N.$$ 
 Using the method in \cite{As}
(outlined below), 
 a further improvement was presented by Srinivasan and Nicholson, who proved 
that
$$s< 2n\left(1+\frac{3}{\log n+\log(\log n)-4}\right)$$ for 
all $n>241$. 
The above result follows from a special case of our main theorem given below.  
 Yang and Togbe \cite{YT}, also used the method in \cite{As}, to give tight upper  
and lower bounds for $R_n$ for large $n$ (greater than $10^{300}$).
%$2n\left(1+\frac{\log(2)}{\log n}-f(n)\right)$. 
For some interesting generalizations of Ramanujan primes the reader may refer to 
\cite{Ax}, 
 \cite{Pak} and 
\cite{Sh}.

 The main idea in \cite{As} is to define 
a function $F(x)$ that is decreasing for $x\ge 2n$ and that satisfies
$F(s)>0$. Then, an $\alpha>2n$ is found such that $F(\alpha)<0$ for $n>N$, which would imply 
that $s<\alpha$ for $n>N$ given the decreasing nature of $F$. 
We employ a variation of this method, where we first show that  
$F(\alpha)$ is a decreasing function for $n>N$. Then we find an integer greater 
than $N$ for which $F(\alpha)<0$, which leads us to the desired result. 
Our main result is the following.
\begin{theorem}
Let $R_n=p_s$
and $\epsilon>0$. 
Let $j(n)>0$ be a function such that 
$j(n)\to \infty$ and $nj'(n)\to 0$ 
as $n\to \infty$ and 
let $$g(n)=\frac{\log n +j(n)}{\log 2+\epsilon}.$$ 
Then there exists a positive integer $N$ such that
for all $n>N$, we have $s<\alpha$, where 
$\alpha=2n\left(1+\frac{1}{g(n)}\right)$.

\end{theorem}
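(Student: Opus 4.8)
\medskip\noindent\textbf{Proof strategy.}
The plan is to carry out the strategy of \cite{As} in the variant announced above. Everything rests on the exact description of $s$: since $R_n=p_s$ and the interval $(R_n/2,R_n]$ contains exactly $n$ primes, we have $\pi(p_s)-\pi(p_s/2)=n$, i.e.\ $\pi(p_s/2)=s-n$, and hence
\[
p_{s-n}\le\frac{p_s}{2}\,,\qquad\text{equivalently}\qquad 2\,p_{s-n}\le p_s .
\]
Also, Sondow's bounds $p_{2n}<R_n<p_{4n}$ give $2n<s<4n$, so $s-n>n$ and, once $n$ is large, both indices $s$ and $s-n$ exceed any prescribed constant; this is what allows explicit estimates for the $k$-th prime to be inserted.

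First I would fix, from Dusart's explicit estimates, bounds $L(k)\le p_k\le U(k)$ valid for all large $k$, both equal to $k\bigl(\log k+\log\log k-1\bigr)$ plus an explicit correction of size $O\!\bigl(k\log\log k/\log k\bigr)$; it is crucial that the main term $k(\log k+\log\log k-1)$ be common to $U$ and $L$. Put
\[
F(x)=U(x)-2\,L(x-n).
\]
From $2p_{s-n}\le p_s$, $p_s\le U(s)$ and $L(s-n)\le p_{s-n}$ one gets $F(s)=U(s)-2L(s-n)\ge p_s-2p_{s-n}\ge 0$, and in fact $F(s)>0$ (Dusart's bounds being strict for large $k$); this is the analogue of ``$F(s)>0$'' in \cite{As}. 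A short computation shows $F'(x)<0$ for $2n\le x\le 4n$ when $n$ is large: to leading order $F'(x)\approx\log x+\log\log x-2\log(x-n)-2\log\log(x-n)$, which at $x\approx 2n$ is $\approx\log 2-\log n\to-\infty$. As $2n<s<4n$ and $2n<\alpha<4n$, $F$ is decreasing between $s$ and $\alpha$, so it suffices to prove $F(\alpha)<0$ in order to conclude $s<\alpha$.

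To obtain $F(\alpha(n))<0$ for all large $n$, write $h(n)=F(\alpha(n))$. Following the announced variation I would (i) work out the asymptotics of $h$, and (ii) show $h'(n)<0$ for large $n$; by (ii) the function $h$ is eventually decreasing, so the (in fact, every) large value of $n$ produced by (i) with $h(n)<0$ yields an explicit $N$ beyond which $h<0$, whence $s<\alpha$ there. Writing $\alpha=2n+2n/g(n)$ and expanding, the cancellation between the common main terms of $U$ and $L$ leaves
\[
h(n)=2n\log 2-\frac{2n\,(\log n+\log\log n)}{g(n)}+o(n).
\]
Since $g(n)=(\log n+j(n))/(\log 2+\epsilon)$ and $n j'(n)\to 0$ forces $j(n)=o(\log n)$ (as $j'(n)=o(1/n)$ integrates to $j(n)=o(\log n)$), we have $(\log n+\log\log n)/g(n)\to\log 2+\epsilon$, hence $h(n)=-2\epsilon\,n+o(n)\to-\infty$; in particular $h(n)<0$ for all large $n$.

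The step I expect to be the main obstacle is (ii) — with $F'(x)<0$ a milder companion — namely making the sign computations effective. Differentiating $h(n)=U(\alpha(n))-2L(\alpha(n)-n)$ gives $U'$ and $L'$ contributions that largely cancel (leaving a bare $2\log 2$) together with terms carrying the factor $\alpha'(n)=2+2/g(n)-2n\,g'(n)/g(n)^2$, where $n\,g'(n)=(1+n j'(n))/(\log 2+\epsilon)$. Both halves of the hypothesis enter exactly here: $n j'(n)\to 0$ keeps the $g'$-contribution negligible, while $j(n)=o(\log n)$ drives the surviving term $-2\log n/g(n)$ to $-2(\log 2+\epsilon)$, so that $h'(n)\to 2\log 2-2(\log 2+\epsilon)=-2\epsilon<0$. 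Turning these limits into an explicit threshold $N$ — as is needed for the corollaries mentioned in the introduction — is the technical core of the argument; the remainder is routine bookkeeping with the explicit prime bounds and elementary estimates.
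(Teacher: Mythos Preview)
Your proposal is correct and follows essentially the same route as the paper: the same comparison function $F(x)=U(x)-2L(x-n)$ built from Dusart's bounds, the same structural facts ($F(s)>0$ from $2p_{s-n}<p_s$, and $F$ decreasing in $x$ on $[2n,4n]$), and the same limit $h'(n)\to -2\epsilon$, which is exactly the content of the paper's Lemma~3.1. The only minor variation is that you also compute $h(n)=-2\epsilon\,n+o(n)$ directly, whereas the paper reaches $G(n)\to-\infty$ by integrating $G'(n)<-2\epsilon+\delta$; your step~(i) in fact already suffices for the existence of $N$, so step~(ii) is, as you correctly note, only needed to produce explicit thresholds for the corollary.
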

In the following corollary we record a bound obtained with $\epsilon=0.5$, where 
$j(n)$ is chosen so as to minimize the number of calculations. 
  Similar results can be given for smaller values of $\epsilon$ (with different $j(n)$) 
where the determination of $N$ depends solely on computational power.
\begin{cor} Let $R_n=p_s$. Then for $n>43$ we have 
$s<
2n\left(1+\frac{1}{g(n)}\right)$, where 
$$g(n)=\frac{\log n +\log_2 n-\log 2-0.5}{\log 2+0.5}.$$ 
\end{cor}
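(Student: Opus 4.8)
The plan is to obtain the corollary as the explicit instance of Theorem 1.1 corresponding to $\epsilon=\tfrac12$ and $j(n)=\log_2 n-\log 2-\tfrac12$, where $\log_2 n$ denotes the iterated logarithm $\log\log n$; with this choice $g(n)=\dfrac{\log n+\log_2 n-\log 2-0.5}{\log 2+0.5}$, as in the statement. First I would check that this $j$ is admissible for Theorem 1.1: it is increasing, it is positive once $\log\log n>\log 2+\tfrac12$ (which already holds below $n=44$), it tends to infinity, and $n\,j'(n)=1/\log n\to 0$. Theorem 1.1 then yields $s<\alpha$ for all $n$ past some threshold $N$, and the whole content of the corollary is that one may take $N=43$.

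To pin this threshold down I would rerun the proof of Theorem 1.1 with these concrete $\epsilon$ and $j$, replacing each appeal to ``$n$ large enough'' by the explicit inequality it actually requires. I expect two such points. The first is the step showing that $n\mapsto F(\alpha(n))$ is eventually decreasing; this is exactly where $n\,j'(n)\to 0$ is used, and here it may be invoked in the crude form $n\,j'(n)=1/\log n\le 1/\log 44$, producing an explicit $N_1$ beyond which $F(\alpha(n))$ decreases. The second is the step exhibiting one integer $n_0>N_1$ with $F(\alpha(n_0))<0$: since $\log n+j(n)=\log n+\log_2 n-\log 2-\tfrac12$ is designed to cancel against the explicit estimates for $p_k$ and for $\pi(x)-\pi(x/2)$ used in the proof, this inequality should reduce to an elementary estimate whose least integer solution $n_0$ is found by a short computation. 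Monotonicity of $F(\alpha(n))$ for $n>N_1$ then carries $F(\alpha(n))<0$, and hence $s<\alpha$, to every $n\ge\max(n_0,N_1+1)$.

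There remain the finitely many $n$ with $43<n<\max(n_0,N_1+1)$, if any. For each of these the value $R_n=p_s$ is read off from the known tables of Ramanujan primes, and the numerical inequality $s<2n\bigl(1+1/g(n)\bigr)$ is verified directly. The choice of $j(n)$ in the statement is precisely the one that keeps both $N_1$ and $n_0$ small, so that this residual list is short; the paper notes that for smaller $\epsilon$ it grows, which is why the threshold there depends on available computing power.

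The step I expect to be the genuine obstacle is the explicit monotonicity claim: turning ``$F(\alpha(n))$ is eventually decreasing'' into ``$F(\alpha(n))$ is decreasing for every $n>43$'' forces a uniform bound on the derivative (or forward difference) of $F(\alpha(n))$ over the whole range $n>43$, where the error term that the $n\,j'(n)$-hypothesis normally sends to zero must be controlled using only the weak bound $1/\log n\le 1/\log 44$. Everything else is either a direct citation of Theorem 1.1 or a bounded numerical check.
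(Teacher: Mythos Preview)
Your plan is correct and is essentially the paper's own argument: instantiate Theorem~1.1 with $\epsilon=0.5$ and $j(n)=\log_2 n-\log 2-0.5$, make the ``eventually'' steps explicit to obtain a threshold $N_1$, check $G(N_1)<0$, and then verify $43<n\le N_1$ directly from tables of Ramanujan primes.

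Two corrections of expectation are worth noting. First, the threshold is not close to $43$: the paper takes $N_1=688383$, and this is essentially forced, because the Dusart upper bound $p_k<U(k)$ (hence the inequality $F(s,n)>0$ from Lemma~2.2) is only valid for $k\ge 688383$. So the ``short residual list'' is in fact about $6.9\times 10^5$ values, handled by machine. Second, your anticipated obstacle---proving $F(\alpha(n))$ decreasing for \emph{every} $n>43$---is not needed and is not what the paper does; it only shows $G'(n)<0$ for $n>688383$ (via the splitting $\epsilon=\epsilon_1+\epsilon_2=0.1+0.4$ together with Lemma~2.3), checks the single value $G(688383)<0$, and leaves everything below to the numerical verification.
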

%\begin{rem}
%The result in Corollary 1.1  improves upon previous results that are  
%either valid only for large $n$ ( as in \cite{YT}), 
%or, have an upper bound that is greater than the one given 
%in the corollary ( as in \cite{SN}). 
%\end{rem}
\section{The basic functions and lemmas}
We will use the following bounds for the $k^{\rm th}$ prime given by Dusart. 
\begin{lemma}
The following hold for the $k^{\rm th}$ prime $p_k$.
\begin{enumerate}
\item
$p_k>k\left(\log k+\log_2 k-1+\frac{\log_2 k-2.1}{\log k}\right)$ for all $k\ge 3$.
\item
$p_k<k\left(\log k+\log_2 k-1+\frac{\log_2 k-2}{\log k}\right)$ for all $k\ge 688383$.

\end{enumerate}
\end{lemma}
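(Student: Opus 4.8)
The plan is to invoke Dusart's explicit estimates rather than to reprove them, since both inequalities are precisely such estimates for the $k^{\rm th}$ prime. I would point out that the two statements are the lower and upper truncations of the single classical expansion
\bb
p_k=k\left(\log k+\log_2 k-1+\frac{\log_2 k-2}{\log k}+O\!\left(\frac{(\log_2 k)^2}{(\log k)^2}\right)\right),
\ee
obtained by inverting the asymptotic series for the prime counting function $\pi(x)$. The numerator constant (here $2.1$ for the lower bound and $2$ for the upper bound) together with the range of validity ($k\ge 3$ versus $k\ge 688383$) are chosen exactly so that the displayed truncation becomes a genuine one-sided inequality throughout each stated range.

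Were I to establish these bounds from first principles, I would follow the standard route. First I would obtain explicit two-sided control on Chebyshev's function $\theta(x)=\sum_{p\le x}\log p$, of the form $|\theta(x)-x|\le\eta(x)\,x$ with an explicit decreasing $\eta(x)$; this rests on a numerically verified zero-free region for the Riemann zeta function together with a verification of the Riemann Hypothesis up to a large height. Next I would pass by partial summation to explicit bounds on $\pi(x)$, and finally, using the defining relation $\pi(p_k)=k$, invert the asymptotic series term by term to convert bounds on $\pi$ into bounds on $p_k$; the constants $2$ and $2.1$ would emerge precisely at this inversion step.

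The hard part will be the analytic input: forcing the error term $\eta(x)$ to be small enough that the coefficient of $1/\log k$ is pinned down to the needed precision is the delicate step, and it is exactly there that the explicit zero-free region and the large-height Riemann Hypothesis computation become indispensable. A secondary difficulty is computational, namely extending the lower bound all the way down to $k\ge 3$ and locating the sharp crossover $k\ge 688383$ for the upper bound, which requires checking the inequality directly below the threshold at which the analytic estimate provably takes over. Since all of this has been carried out by Dusart, for our purposes it suffices to cite his results.
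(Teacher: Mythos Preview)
Your proposal is correct and matches the paper's approach: the paper simply cites Dusart \cite{Du} for both inequalities without reproving them. Your additional discussion of how one would derive such bounds from first principles is informative but unnecessary here, since the paper treats the lemma purely as a quotation of known explicit estimates.
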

\begin{proof} See \cite{Du}
\end{proof}
Let $$U(k)=k\left(\log k+\log_2 k-1+\frac{\log_2 k-2}{\log k}\right)$$ and 
$$L(k)=k\left(\log k+\log_2 k-1+\frac{\log_2 k-2.1}{\log k}\right).$$ 
Note that $U(x)=L(x)+f(x)$ where
$f(x)=\dfrac{0.1x}{\log x}$. We define 
$$F(x, n)=U(x)-2L(x-n)=U(x)-2U(x-n)+2f(x-n)$$ 
and 
$$G(n)=F(\alpha, n), $$ 
where $\alpha=2n\left(1+\frac{1}{g(n)}\right)$ and 
$g(n)$ is a function that satisfies $g(n)\ge 1$ and 
$g(n)\to \infty$ as $n\to \infty$. 
\begin{lemma}
Let $R_n=p_s$. Then the following hold.
\begin{enumerate}
\item
$p_{s-n}<\frac{1}{2} p_s.$
\item
$2n<\alpha<2.4n$ for all $n>43$.
\item
 $F(x, n)$ is a decreasing function for all $x\ge 2n$ and 
 $F(s, n)>0$ 
for $n\ge 688383$.
\end{enumerate}
\end{lemma}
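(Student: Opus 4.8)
I would treat the three assertions separately, since they are essentially independent. Part (1) is immediate from the definition of $R_n$: the interval $\left(\frac{R_n}{2},R_n\right]$ contains exactly $n$ primes and $R_n=p_s$, so those primes are $p_{s-n+1},\dots,p_s$; hence the prime $p_{s-n}$ lies outside the interval, giving $p_{s-n}\le \frac{R_n}{2}=\frac{p_s}{2}$, and since $p_s$ is an odd prime, $\frac{p_s}{2}\notin\Z$, whence $p_{s-n}<\frac12 p_s$. For part (2), $\alpha>2n$ holds because $g(n)>0$, while $\alpha<2.4n$ is equivalent to $g(n)>5$; with the explicit choice of $g$ the numerator $\log n+j(n)$ is eventually increasing and unbounded, so $g(n)>5$ past an explicit point, and the finitely many remaining values $n>43$ are checked by direct numerical evaluation.

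For part (3) I would split the sign statement from the monotonicity. The positivity $F(s,n)>0$ is the short part: by Sondow's bound $p_{2n}<R_n$ we have $s>2n$, so $s>2n\ge 2\cdot 688383$ and $s-n>n\ge 688383\ge 3$; therefore Lemma 2.1 applies in the forms $p_s<U(s)$ and $p_{s-n}>L(s-n)$, and combining these with part (1) yields
\[
U(s)>p_s>2p_{s-n}>2L(s-n),
\]
which is exactly $F(s,n)=U(s)-2L(s-n)>0$.

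The monotonicity is where the real work lies. Using $F(x,n)=U(x)-2U(x-n)+2f(x-n)$ with $f(x)=\frac{0.1x}{\log x}$, I would differentiate in $x$ to get $\frac{\partial F}{\partial x}=U'(x)-2U'(x-n)+2f'(x-n)$, where a direct computation gives
\[
U'(k)=\log k+\log_2 k+\frac{1}{\log k}+\frac{(\log_2 k-2)(\log k-1)+1}{(\log k)^2},\qquad f'(k)=\frac{0.1(\log k-1)}{(\log k)^2}.
\]
One checks that $U'$ is increasing (via $U''>0$) and that $f'$ is decreasing with $f'(k)\to 0$ on the relevant range. Since $x\ge 2n$ forces $x-n\ge\frac x2$, these monotonicities give $U'(x-n)\ge U'\!\left(\tfrac x2\right)$ and $f'(x-n)\le f'\!\left(\tfrac x2\right)$, so
\[
\frac{\partial F}{\partial x}(x,n)\le U'(x)-2U'\!\left(\tfrac x2\right)+2f'\!\left(\tfrac x2\right),
\]
whose right-hand side depends on $x$ alone. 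Its leading term is $\log x-2\log\frac x2=-\log x+2\log 2$, and the remaining terms are $O(\log_2 x)$; grouping them shows each contribution is negative, so the bound is negative for all $x$ beyond a small explicit value, in particular for all $x\ge 2n$ in the range considered. Hence $F(\cdot,n)$ is strictly decreasing on $[2n,\infty)$.

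The main obstacle is precisely the estimate in the last display: one must carefully track the $O(1/\log x)$ and $O(\log_2 x/\log x)$ terms coming from $U'(x)$, $U'(x/2)$ and $f'(x/2)$, and confirm that $U'$ is genuinely increasing on $[n,\infty)$ so that the reduction to a one-variable inequality is legitimate. Everything else — parts (1), (2), and the positivity of $F(s,n)$ — is routine.
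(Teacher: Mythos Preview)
Your argument is correct and considerably more detailed than the paper's own treatment, which simply cites \cite{As} for parts (1)--(2) and \cite{YT} for part (3) without giving a self-contained proof.

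Parts (1), (2), and the positivity half of (3) are fine. For (2) you rightly reduce $\alpha<2.4n$ to $g(n)>5$; note that the lemma is stated for the generic $g$ satisfying only $g(n)\ge 1$, for which the threshold ``$n>43$'' cannot be universal---this is a minor imprecision of the paper itself, and in the proof of the Corollary the bound is only invoked for $n>688383$, where $g(n)>5$ does hold for the explicit $g$ used there.

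For the monotonicity in (3), your reduction via $x-n\ge x/2$, together with $U'$ increasing and $f'$ decreasing, to the one-variable inequality $U'(x)-2U'(x/2)+2f'(x/2)<0$ is valid once $x/2$ exceeds a modest explicit threshold (so that $U''>0$ and $f''<0$ on the relevant interval). One small slip: not every lower-order contribution is individually negative---for instance the $-1/\log$ piece of $U'$ yields $-\tfrac{1}{\log x}+\tfrac{2}{\log(x/2)}>0$. What is true is that the full expression equals
\[
-\log x-\log_2 x+2\log 2+O\!\left(\frac{\log_2 x}{\log x}\right),
\]
so the dominant $-\log x$ term forces negativity beyond an explicit point. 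With that correction your outline goes through.
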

\begin{proof}  For parts 1 and 2 see \cite[Lemma 2.1]{As} and \cite[Remark 2.1]{As} respectively. 
For part 3 see \cite{YT}.
$\qed$
\end{proof}

The following lemma contains useful results that include an expression for the derivative $G'(n)$ in terms of the
function $U(x)$.
\begin{lemma}
Let $A=U'(\alpha)-U'(\alpha-n)$. Then the following hold.
\anu
\item $A=A(n)\to \log 2$ as  
$n \to \infty$. 
\item
$
\frac{1}{2}G'(n) 
=A+f'(\alpha-n)+
\left(\frac{n}{g(n)}\right)'
(A-U'(\alpha-n)+2f'(\alpha-n)).
$
\item
$L'(x)>\log x+\log_2 x$ for $x>20$.
\item
$
 A+f'(\alpha-n)-\log 2<
 \log\left( \frac{\log \alpha}{\log(\alpha-n)}\right) +
\frac{\log_2 \alpha}{\log \alpha}+\frac{1.1}{\log(\alpha-n)}+\frac{\log_2(\alpha-n)}{\log^2(\alpha-n)}$.

\pab
\end{lemma}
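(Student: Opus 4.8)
The plan is to reduce all four parts to one computation — the derivatives of $U$, $L$ and $f$ — and then to elementary estimates. A termwise differentiation (product and quotient rules) gives
$$U'(x)=\log x+\log_2 x+\frac{\log_2 x-1}{\log x}+\frac{3-\log_2 x}{\log^2 x},\qquad f'(x)=\frac{0.1}{\log x}-\frac{0.1}{\log^2 x},$$
and $L'(x)=U'(x)-f'(x)$. Part 3 then follows at once: $L'(x)-\log x-\log_2 x=\frac{\log_2 x-1.1}{\log x}+\frac{3.1-\log_2 x}{\log^2 x}$, and multiplying by $\log^2 x>0$ and putting $t=\log x$ turns the claim into $t\log t-1.1t+3.1-\log t>0$; this function has positive second derivative, positive first derivative at $t=\log 20$, and positive value there, so it is positive for all $x>20$.

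For Part 1, write $A=U'(\alpha)-U'(\alpha-n)$ as the sum of the four differences coming from the four summands of $U'$. Since $g(n)\to\infty$ we have $\alpha\to\infty$, $\alpha-n\to\infty$ and $\frac{\alpha}{\alpha-n}=\frac{2(1+1/g(n))}{1+2/g(n)}\to 2$; hence $\log\alpha-\log(\alpha-n)=\log\frac{\alpha}{\alpha-n}\to\log 2$, and $\log_2\alpha-\log_2(\alpha-n)=\log\frac{\log\alpha}{\log(\alpha-n)}\to 0$ because $\frac{\log\alpha}{\log(\alpha-n)}=1+\frac{\log(\alpha/(\alpha-n))}{\log(\alpha-n)}\to 1$. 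The two remaining differences tend to $0$ since their terms have the shape $\frac{\log_2 x-1}{\log x}$ and $\frac{3-\log_2 x}{\log^2 x}$ with $x\to\infty$. Thus $A\to\log 2$.

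For Part 2, note that $\alpha=2n+2\bigl(\frac{n}{g(n)}\bigr)$ is a differentiable function of $n$ with $\alpha'=2+2\bigl(\frac{n}{g(n)}\bigr)'$ and $(\alpha-n)'=\alpha'-1=1+2\bigl(\frac{n}{g(n)}\bigr)'$. The chain rule applied to $G(n)=U(\alpha)-2U(\alpha-n)+2f(\alpha-n)$ gives $G'(n)=U'(\alpha)\alpha'-2U'(\alpha-n)(\alpha'-1)+2f'(\alpha-n)(\alpha'-1)$; separating the terms free of $\bigl(\frac{n}{g(n)}\bigr)'$ from those carrying it, dividing by $2$, and using $A=U'(\alpha)-U'(\alpha-n)$ produces exactly the claimed identity.

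Part 4 is the one that requires some care, and I expect it to be the main obstacle. The crucial first step is the identity $A+f'(\alpha-n)=U'(\alpha)-U'(\alpha-n)+f'(\alpha-n)=U'(\alpha)-L'(\alpha-n)$, from $U'=L'+f'$. Inserting the explicit formulas and grouping terms of the same type expresses $A+f'(\alpha-n)-\log 2$ as a sum of four brackets: a $\log$-bracket $\log\alpha-\log(\alpha-n)-\log 2=\log\frac{\alpha}{2(\alpha-n)}$, which is negative since $\alpha<2(\alpha-n)$ (equivalently $\alpha>2n$, cf. Lemma 2.2(2)) and is simply discarded; a $\log_2$-bracket equal to $\log\frac{\log\alpha}{\log(\alpha-n)}$; a $1/\log$-bracket $\frac{\log_2\alpha-1}{\log\alpha}-\frac{\log_2(\alpha-n)-1}{\log(\alpha-n)}+\frac{0.1}{\log(\alpha-n)}$, bounded above by $\frac{\log_2\alpha}{\log\alpha}+\frac{1.1}{\log(\alpha-n)}$ — here the constant $1.1$ is exactly $1+0.1$, the $0.1$ being absorbed from $f'$, and one uses $\log_2(\alpha-n)\ge 0$; and a $1/\log^2$-bracket, bounded by $\frac{\log_2(\alpha-n)}{\log^2(\alpha-n)}$ after replacing $\frac{3-\log_2\alpha}{\log^2\alpha}$ by $\frac{3}{\log^2(\alpha-n)}$ using $\log_2\alpha\ge 0$ and $\log\alpha>\log(\alpha-n)$. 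Adding the four estimates yields the inequality. The only real care needed is to discard the ``harmless'' positive quantities ($\frac{1}{\log\alpha}$, $\frac{0.1}{\log^2(\alpha-n)}$, and so on) in the correct direction, and to note that every sign condition invoked ($\alpha-n\ge e$, $\alpha>2n$, $\log\alpha>\log(\alpha-n)>0$) holds for $n>43$ by Lemma 2.2(2).
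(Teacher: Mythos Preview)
Your argument is correct and follows essentially the same route as the paper: explicit computation of $U'$, $L'$, $f'$, followed by the chain rule for $G'$ and a term-by-term sign analysis for Parts 1 and 4 in which the negative pieces (your discarded $\log$-bracket is exactly the paper's $\log\frac{1+1/g}{1+2/g}$) are dropped to reach the stated upper bound. Your treatment of Part~3 is in fact slightly more careful than the paper's, which asserts that $\frac{\log_2 x}{\log x}-\frac{\log_2 x}{\log^2 x}-\frac{1.1}{\log x}>0$ for $x>20$ (this fails near $x=20$; the positive remainder $\frac{3.1}{\log^2 x}$ is what saves the inequality, and your one-variable reduction to $t\log t-1.1t+3.1-\log t>0$ handles this cleanly).
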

\begin{proof}
We have 
\begin{equation}
U' (x)=\log x+\log_2 x-\frac{1}{\log x}+\frac{3}{\log^2 x}-\frac{\log_2 x}{\log^2 x}
+\frac{\log_2 x}{\log x}
\end{equation}
and hence
$$A=
\log\left(\frac{\alpha}{\alpha-n}\right)
+
\log\left(\frac{\log(\alpha)}{\log(\alpha-n)}\right)
+t(n),
$$
where $t(n)\to 0$ as $n\to \infty$.  
As $\alpha=2n\left(1+\frac{1}{g(n)}\right)$ and $g(n)\to \infty$, we have 
$A\to\log 2$.

For the second part of the lemma, 
$G(n)=U(\alpha)-2U(\alpha-n)+2f(\alpha-n)$, which gives
$G'(n)=U'(\alpha)\alpha'-2U'(\alpha-n)(\alpha'-1)+2f'(\alpha-n)(\alpha'-1)$. 
As $\alpha'=2+2\left(\frac{n}{g(n)}\right)'$, we have 
$$
\frac{1}{2}G'(n) 
 =U'(\alpha)
\left(1+\left(\frac{n}{g}\right)'\right)+
\left(
1+2\left(\frac{n}{g}\right)'
\right)
(f'(\alpha-n)-U'(\alpha-n))
$$
and the result follows by the definition of $A$.

For part 3 we have 
$$L'(x)=\log x+\log_2 x+\frac{\log_2 x}{\log x}-\frac{\log_2 x}{\log^2 x}
-\frac{1.1}{\log x}+\frac{3.1}{\log^2 x}$$
from which the claim follows as for $n>20$
we have $\frac{\log_2 x}{\log x}-\frac{\log_2 x}{\log^2 x}
-\frac{1.1}{\log x}>0$.

For the last part, we have 
\begin{align*}
&A-\log 2+f'(\alpha-n)  \\
 =&  \log\left( \frac{\log \alpha}{\log(\alpha-n)}\right) +
\frac{\log_2 \alpha}{\log \alpha}+\frac{1.1}{\log(\alpha-n)}+\frac{\log_2(\alpha-n)}{\log^2(\alpha-n)}
+T, 
\end{align*}
where 
$$T=
\log\left(\frac{1+\frac{1}{g(n)}}{1+\frac{2}{g(n)}}\right)
-\frac{\log_2 (\alpha-n)}{\log (\alpha-n)} -
\frac{1}{\log \alpha}-
\frac{\log_2 \alpha}{\log^2 \alpha}+
\frac{3}{\log^2\alpha}-
\frac{3.1}{\log^2(\alpha-n)}
<0
$$ 
as $\frac{3}{\log^2\alpha}-
\frac{3.1}{\log^2(\alpha-n)}<0$.
$\qed$ 
\end{proof}
\section{Proofs of main results}
The following lemma shows that $G'(n)$ is a decreasing function for large $n$, which is 
crucial in the proof of Theorem 1.1.
\begin{lemma}
Let $\epsilon>0$ and  
$$g(n)=\frac{\log n +j(n)}{\log 2+\epsilon},$$ 
where $j(n)>0$ is a function that satisfies
$j(n)\to \infty$ and $nj'(n)\to 0$ as $n\to \infty$. Then 
$G'(n)\to -2\epsilon$.

\end{lemma}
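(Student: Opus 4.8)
The plan is to start from the closed form for $G'(n)$ recorded in Lemma~2.3(2),
$$
\tfrac12 G'(n)=A+f'(\alpha-n)+\Bigl(\tfrac{n}{g(n)}\Bigr)'\bigl(A-U'(\alpha-n)+2f'(\alpha-n)\bigr),
$$
and to identify the limit of each of the three summands. By Lemma~2.3(1) we have $A\to\log 2$, and since $f'(x)=\tfrac{0.1}{\log x}-\tfrac{0.1}{\log^2 x}\to 0$ while $\alpha-n=n\bigl(1+\tfrac{2}{g(n)}\bigr)>n\to\infty$, also $f'(\alpha-n)\to 0$; hence $A+f'(\alpha-n)\to\log 2$. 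Everything then reduces to showing that the third summand tends to $-(\log 2+\epsilon)$.

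The one genuinely delicate point is the growth of $j$. I would first isolate the observation that the hypothesis $nj'(n)\to 0$ forces $j(n)=o(\log n)$: given $\delta>0$, choose $n_0$ with $|j'(t)|<\delta/t$ for all $t\ge n_0$; integrating from $n_0$ to $n$ gives $|j(n)|\le|j(n_0)|+\delta\log n$, so $\limsup_{n\to\infty}j(n)/\log n\le\delta$, and letting $\delta\to0$ yields the claim. Combined with $g(n)=\tfrac{\log n+j(n)}{\log 2+\epsilon}\to\infty$, this also gives $\log(\alpha-n)=\log n+\log\bigl(1+\tfrac{2}{g(n)}\bigr)=\log n+o(1)$.

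Next I would estimate the two factors of the third summand separately. Differentiating $\tfrac{n}{g(n)}=\tfrac{(\log 2+\epsilon)n}{\log n+j(n)}$ directly gives
$$
\Bigl(\tfrac{n}{g(n)}\Bigr)'=(\log 2+\epsilon)\,\frac{\log n+j(n)-1-nj'(n)}{(\log n+j(n))^2},
$$
and since $nj'(n)\to 0$ while $\log n+j(n)\to\infty$, the numerator equals $\bigl(\log n+j(n)\bigr)(1+o(1))$, so $\bigl(\tfrac{n}{g(n)}\bigr)'=\tfrac{\log 2+\epsilon}{\log n+j(n)}(1+o(1))$. For the other factor, the explicit formula for $U'$ in the proof of Lemma~2.3 gives $U'(x)-\log x=\log_2 x+O\!\bigl(\tfrac{\log_2 x}{\log x}\bigr)=o(\log x)$, so together with $A=O(1)$, $f'(\alpha-n)=o(1)$ and $\log(\alpha-n)=\log n+o(1)$ we get $A-U'(\alpha-n)+2f'(\alpha-n)=-\bigl(\log n+\log_2 n\bigr)(1+o(1))$. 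Multiplying the two estimates, the third summand equals $-(\log 2+\epsilon)\,\dfrac{\log n+\log_2 n}{\log n+j(n)}(1+o(1))$, and since $\log_2 n=o(\log n)$ and $j(n)=o(\log n)$ the fraction tends to $1$; hence the third summand tends to $-(\log 2+\epsilon)$, and $\tfrac12 G'(n)\to\log 2-(\log 2+\epsilon)=-\epsilon$, that is, $G'(n)\to-2\epsilon$.

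The step I expect to be the main obstacle is the preliminary claim $j(n)=o(\log n)$, since this is exactly what forces the fraction $\dfrac{\log n+\log_2 n}{\log n+j(n)}$ to converge to $1$; without it the conclusion genuinely fails, for instance for $j(n)=\log n$, which indeed violates $nj'(n)\to 0$. Everything else is routine estimation of lower-order terms, fully controlled by the explicit expression for $U'$ (and the bound in Lemma~2.3(4) if one wants an explicit rate) already at our disposal.
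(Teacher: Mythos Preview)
Your proof is correct and follows essentially the same route as the paper: both start from Lemma~2.3(2), use $A\to\log 2$ and $f'(\alpha-n)\to 0$, and then show that $\bigl(\tfrac{n}{g(n)}\bigr)'U'(\alpha-n)\to\log 2+\epsilon$ (equivalently, that the third summand tends to $-(\log 2+\epsilon)$), the key input being $j(n)/\log n\to 0$. The paper simply asserts this last fact (``by our assumption on $j(n)$''), whereas you supply the integration argument from $nj'(n)\to 0$; beyond that the two arguments differ only in bookkeeping (the paper splits off the $\log(\alpha-n)$ and $\log_2(\alpha-n)$ contributions separately, while you package everything as $(1+o(1))$ factors).
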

\begin{proof}
We have $\left(\frac{n}{g(n)}\right)'=\frac{(\log 2+\epsilon)(\log n+j(n)-1-nj'(n))}
{(\log n+j(n))^2}$ and therefore 
$\left(\frac{n}{g(n)}\right)'\to 0$ as $n\to \infty$. 
By our assumption on $j(n)$ it follows that $\frac{j(n)}{\log n}\to 0$
which gives $ \left(\frac{n}{g(n)}\right)'\log(\alpha-n)\to \log 2+\epsilon$
(as $\frac{\log(\alpha-n)}{\log n}\to 1$). It is easy to see that  
$ \left(\frac{n}{g(n)}\right)'\log_2(\alpha-n)\to 0$. 
It follows that  
 $ \left(\frac{n}{g(n)}\right)'U'(\alpha-n)\to \log 2+\epsilon$ (see equation (1)). Lastly note that 
$f'(x)\to 0$ as $x\to \infty$. The result follows now on using all the above and the fact that 
$A\to \log 2$ (Lemma 2.3 part 1) in part 2 of Lemma 2.3. 
$\qed$

{\bf Proof of Theorem 1.1}
We will first show that there exists a positive integer $N$, such that 
$G(n)<0$ for $n>N$. We have  $G'(n)\to -2\epsilon$ by the lemma above, which means that  
if $0<\delta<2\epsilon$, then there exists an integer $M$, such that  for all $n>M$ 
we have $|G'(n)+2\epsilon|<\delta$, that is 
$$-2\epsilon-\delta<G'(n)<-2\epsilon+\delta, $$
for all $n>M$. 
Let $a$ and $b$ be two integers such that $M<a<b$. Then 
$G(b)-G(a)=\int_a^b G'(n) dn<(b-a) (-2\epsilon+\delta)<0$. If $a$ is fixed, it follows that 
 $G(b)<G(a)+(b-a)(-2\epsilon+\delta)<0$ for large $b$. 
 Therefore there exists a positive integer $N>M$, such that for all $n>N$, 
we have $G(n)=F(\alpha, n)<0$. 

 We may assume that $N>688383$ so that 
from Lemma 2.2, part 3  we have $F(s, n)>0$. Moreover, from the same lemma we have 
$F(x, n)$ is decreasing for $x\ge 2n$.
 As $s$ and $\alpha$ are both bigger than $2n$, we have $s<\alpha$  for $n>N$ and 
the result follows. 
$\qed$
\end{proof}
{\bf Proof of Corollary}

Let $\epsilon=\epsilon_1+\epsilon_2=0.5$. 
  We will first show that for $n>688383$ we have 
$G'(n)<0$. 

 Let $\epsilon_1=0.1$.  It is easy to verify that for 
$n>688383$ we have 
$$\frac{1+\log n}{\log n(\log n+\log_2 n-\log 2-\epsilon)}<
\frac{\epsilon_1}{\log 2+\epsilon}.$$ It follows that  
for all $n>688383$
\begin{equation}
\frac{ng(n)'}{g(n)^2}=
\frac{(\log 2+\epsilon)(1+\log n)}{\log n(\log n+\log_2n-\log 2-\epsilon)^2}
<\frac{\epsilon_1}{\log n+\log_2n-\log 2-\epsilon}.
\end{equation}

Next, we will show that  
 $A+f'(\alpha-n)-\log 2<\epsilon_2$. 

 Using Lemma 2.3, part 4 and Lemma 2.2 part 2,  we have  
\begin{equation}
 A+f'(\alpha-n)-\log 2< \log\left( \frac{\log(2.4n)}{\log n}\right) +
\frac{\log_2(2.4n)}{\log(2n)}+\frac{1.1}{\log n}+\frac{\log_2(1.4n)}{\log^2 n}.
\end{equation}
Observe that 
 for $n>36734$ 
\begin{equation}
\log\left( \frac{\log(2.4n)}{\log n}\right)
<\frac{\epsilon_2}{5}
\end{equation}
as $\log\left( \frac{\log(2.4n)}{\log n}\right)
<\frac{\epsilon_2}{5}$ holds if 
$\frac{\log(2.4n)}{\log n}<e^{\frac{\epsilon_2}{5}}$, 
that is if
$2.4n<n^{e^{\frac{\epsilon_2}{5}}}$. The above holds if  
$2.4<n^{e^{\frac{\epsilon_2}{5}-1}}$ or $n>36734$. 
 
Computation yields that for  $n>688383$  
\begin{equation}
\frac{\log_2(2.4n)}{\log(2n)}+\frac{1.1}{\log n}+\frac{\log_2(1.4n)}{\log^2 n}<\frac{4\epsilon_2}{5}.
\end{equation}

From equations (3)-(5) we have 
 $A+f'(\alpha-n)-\log 2<\epsilon_2$.  
From Lemma 2.3 part 3, 
$L'(\alpha-n)=U'(\alpha-n)-f'(\alpha-n)>\log (\alpha-n)+\log_2(\alpha-n)>\log n+\log_2 n$
and hence for $n>688383$ we have 
\begin{equation}
\frac{A+f'(\alpha-n)}{-A+U'(\alpha-n)-2f'(\alpha-n)}<
\frac{\log 2+\epsilon_2}{\log n+\log_2 n-\log 2-\epsilon_2}.
\end{equation}
As $\epsilon_1+\epsilon_2=\epsilon$,  
equations (2) and (6) give
\begin{equation}
\frac{A+f'(\alpha-n)}{-A+U'(\alpha-n)-2f'(\alpha-n)}+\frac{ng(n)'}{g(n)^2}<
\frac{\log 2+\epsilon_1+\epsilon_2}{\log n+\log_2n-\log 2-\epsilon}
=\frac{1}{g(n)}.
\end{equation}

From Lemma 2.3, part 2, noting that $\left(\frac{n}{g(n)}\right)'=\frac{1}{g(n)}-\frac{ng(n)'}{g(n)^2}$,  
we have 
$G'(n)<0$ for all $n>688383$. Also, $G(688383)<0$ and hence we conclude that 
$G(n)<0$ for $n>688383$. 

 From Lemma 2.2, part 3  we have $F(s, n)>0$ and $F(x, n)$ is decreasing for $x\ge 2n$.
 As $s$ and $\alpha$ are both bigger than $2n$,
it follows that 
$s<\alpha$ for $n>688383$.
That the result holds for $43<n\le 688383$ is a simple calculation.
$\qed$

\begin{rem}
Similar results for lower bounds for $R_n$ can be given using 
$G(x, n)=L(x)-2U(x-n+1)$ instead of $F(x, n)$. 
\end{rem}

\end{document}